\documentclass[12pt]{amsart} \font\emailfont=cmtt10

\headheight=7pt \topmargin=14pt \textheight=574pt \textwidth=445pt
\oddsidemargin=18pt \evensidemargin=18pt

\usepackage{amsmath,amsthm,amsfonts,amscd,flafter,epsf,epsfig,verbatim,color,graphics,pinlabel,graphicx}
\usepackage{tikz} \usepackage{pgflibraryarrows}
\usepackage{pgflibrarysnakes} \usepackage[all]{xy}

\newtheorem{theorem}{Theorem}[section]
\newtheorem{lemma}[theorem]{Lemma}
\newtheorem{proposition}[theorem]{Proposition}

\theoremstyle{definition}

\theoremstyle{remark} \newtheorem{remark}[theorem]{Remark}

 \newcommand{\Z}{\mathbb{Z}}

\newcommand{\marktop}{
  \begin{tikzpicture}[scale=0.25]
    \draw (0,0) -- (1,1); \draw (0,1) -- (0.4,0.6); \draw (1,0) --
    (0.6,0.4); \draw (0,0.7) -- (0.3,1);
  \end{tikzpicture}
}

\newcommand{\markbot}{
  \begin{tikzpicture}[scale=0.25]
    \draw (0,0) -- (1,1); \draw (0,1) -- (0.4,0.6); \draw (1,0) --
    (0.6,0.4); \draw (0.7,0) -- (1,0.3);
  \end{tikzpicture}
}

\newcommand{\vid}{
  \begin{tikzpicture}[scale=0.25]
    \draw (0,1) .. controls (0.4,0.6) and (0.4,0.4) .. (0,0); \draw
    (1,1) .. controls (0.6,0.6) and (0.6,0.4) .. (1,0);
  \end{tikzpicture}
}

\newcommand{\vtoh}{
  \begin{tikzpicture}[scale=0.25]
    \draw (0,1) .. controls (0.4,0.6) and (0.4,0.4) .. (0,0); \draw
    (1,1) .. controls (0.6,0.6) and (0.6,0.4) .. (1,0); \draw [->]
    (0.32,0.5) -- (0.68,0.5);
  \end{tikzpicture}
}

\newcommand{\vld}{
  \begin{tikzpicture}[scale=0.25]
    \draw (0,1) .. controls (0.4,0.6) and (0.4,0.4) .. (0,0); \draw
    (1,1) .. controls (0.6,0.6) and (0.6,0.4) .. (1,0); \fill
    (0.28,0.5) circle (4pt);
  \end{tikzpicture}
}

\newcommand{\vrd}{
  \begin{tikzpicture}[scale=0.25]
    \draw (0,1) .. controls (0.4,0.6) and (0.4,0.4) .. (0,0); \draw
    (1,1) .. controls (0.6,0.6) and (0.6,0.4) .. (1,0); \fill
    (0.72,0.5) circle (4pt);
  \end{tikzpicture}
}

\newcommand{\hid}{
  \begin{tikzpicture}[scale=0.25]
    \draw (0,1) .. controls (0.4,0.6) and (0.6,0.6) .. (1,1); \draw
    (0,0) .. controls (0.4,0.4) and (0.6,0.4) .. (1,0);
  \end{tikzpicture}
}

\newcommand{\htov}{
  \begin{tikzpicture}[scale=0.25]
    \draw (0,1) .. controls (0.4,0.6) and (0.6,0.6) .. (1,1); \draw
    (0,0) .. controls (0.4,0.4) and (0.6,0.4) .. (1,0); \draw [->]
    (0.5,0.32) -- (0.5,0.68);
  \end{tikzpicture}
}

\newcommand{\htd}{
  \begin{tikzpicture}[scale=0.25]
    \draw (0,1) .. controls (0.4,0.6) and (0.6,0.6) .. (1,1); \draw
    (0,0) .. controls (0.4,0.4) and (0.6,0.4) .. (1,0); \fill
    (0.5,0.72) circle (4pt);
  \end{tikzpicture}
}

\newcommand{\hbd}{
  \begin{tikzpicture}[scale=0.25]
    \draw (0,1) .. controls (0.4,0.6) and (0.6,0.6) .. (1,1); \draw
    (0,0) .. controls (0.4,0.4) and (0.6,0.4) .. (1,0); \fill
    (0.5,0.28) circle (4pt);
  \end{tikzpicture}
} \DeclareMathOperator{\id}{id} 

\title[{A sign assignment in totally twisted Khovanov homology}] {A
  sign assignment in totally twisted Khovanov homology}

\author[Andrew Manion]{Andrew Manion} \address{Department of
  Mathematics, Princeton University, New Jersey 08544 \newline
  \indent{\emailfont{amanion@math.princeton.edu}}} \thanks{The author
  was supported by the Department of Defense (DoD) through the
  National Defense Science and Engineering Graduate Fellowship (NDSEG)
  Program.}

\begin{document}

\begin{abstract}
  We lift the characteristic-2 totally twisted Khovanov homology of
  Roberts and Jaeger to a theory with $\Z$ coefficients. The result is
  a complex computing reduced odd Khovanov homology for knots. This
  complex is equivalent to a spanning-tree complex whose differential
  is explicit modulo a sign ambiguity coming from the need to choose a
  sign assignment in the definition of odd Khovanov homology.
\end{abstract}

\maketitle
\section{Introduction}\label{intro}
In the recent paper \cite{Roberts}, Roberts introduced a ``totally
twisted'' version of $\delta$-graded characteristic-2 Khovanov
homology for links. Jaeger~\cite{Jaeger} then showed that for knots,
the reduced totally twisted Khovanov homology actually coincides with
the ordinary reduced Khovanov homology (tensored with a suitable
coefficient field). We show how to extend the totally twisted
construction over $\Z$, in the context of the odd Khovanov homology of
Ozsv{\'a}th, Rasmussen and Szab{\'o}~\cite{Odd}. The result is a chain
complex whose homology computes the reduced odd Khovanov homology of
knots (again, tensored with a suitable ring). Cancelling some
differentials in the complex leads to an equivalent complex whose
generators are in bijection with spanning trees of the Tait graph. The
coefficient of the differential between two spanning trees is
determined up to a sign; the sign ambiguity comes from an analogous
ambiguity in odd Khovanov homology, where one must choose a sign
assignment on the edges of a cube of resolutions.

\subsection{Acknowledgements.} 
The author would like to thank John Baldwin, Zolt{\'a}n Szab{\'o},
Cotton Seed, and Kevin Wilson for several very helpful discussions.

\section{The construction}\label{construction}

We assume the reader is familiar with odd Khovanov homology, as
described in \cite{Odd}. Here we briefly fix notation. Let $D$ be an
$n$-crossing diagram for a link $L$, with marks $m_i$ assigned to
edges. For the general construction, we allow any assignment of marks;
to obtain the relationship with spanning trees in
Section~\ref{spanningtrees}, it will be imporant that each edge has at
least one mark. Let $R$ be the polynomial ring $\Z [x_i]$, with one
variable for each mark. Since we will be working with odd Khovanov
homology, we also want to choose an orientation for each
crossing. Figure~\ref{crossing} below shows one possible choice; the
other has the arrow reversed.

Each crossing in $D$ has a $0$-resolution and a $1$-resolution. A
complete resolution of $D$ gives rise to a diagram with no crossings,
which consists of $k$ unlinked circles. To a complete resolution
$\rho$, associate the group $V_{\rho} = H^*(S^1 \times \ldots \times
S^1)$, where there are $k$ $S^1$ factors. This group is actually a
ring, and if we label the circles of the resolution $a_1, \ldots,
a_k$, a convenient set of multiplicative generators for $V_{\rho}$ may
be labelled $\{a_1, \ldots, a_k\}$ as well. The chain complex
computing odd Khovanov homology may be written
$(C_*^{odd}(D),d_{odd})$, where $C_*^{odd}(D) = \oplus_{\rho \in
  \{0,1\}^n} V_{\rho}$. We refer to \cite{Odd} for the definition of
$d_{odd}$. Here we only remind the reader that while the orientations
on the crossings determine split and merge maps with well-defined
signs, these naive maps do not automatically fit into a differential
satisfying $d_{odd}^2 = 0$. Rather, one must correct by putting signs
on the edges of the cube of resolutions. We will denote the naive maps
collectively as $d'_{odd}$ and reserve the name $d_{odd}$ for the
sign-corrected differential.

\begin{figure}
  \labellist
  \small \hair 2pt \pinlabel $X$ at 173 43 \pinlabel $Y$ at 309 43
  \endlabellist
  \centering
  \includegraphics[scale=0.3]{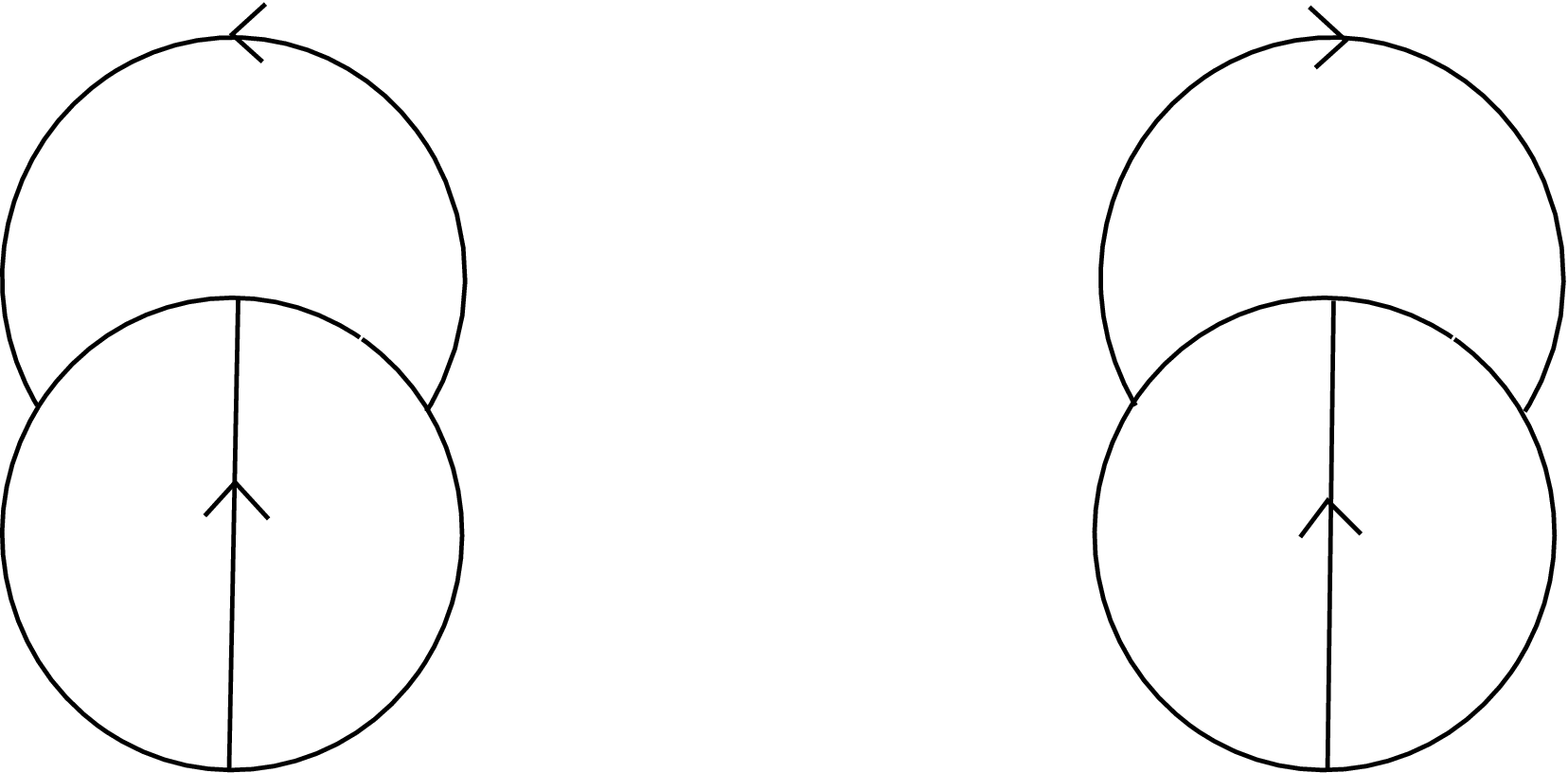}
  \caption{The configurations $X$ and $Y$.}
  \label{XandY}
\end{figure}

The method of correcting the signs makes use of the concept of
2-dimensional (oriented) configurations, which are pictures of a
complete resolution along with two (oriented) arcs such that surgery
along an arc corresponds to switching a crossing. Two examples of
these configurations are depicted in Figure~\ref{XandY}, and many more
are shown below in Figure~\ref{bubbles}. In fact, the configurations
$X$ and $Y$ in Figure~\ref{XandY} are special; they correspond to
faces of the cube of resolutions which both commute and
anticommute. The convention we will use is that that the configuration
labelled $X$ anticommutes and the one labelled $Y$ commutes. We only
mention this here because we will need to deal with 2-dimensional
configurations in Section~\ref{movemarksec}, and there it will be
important that we use this specific convention and not its opposite.

Note that tensoring the odd Khovanov complex with $R$ amounts simply
to taking cohomology with $R$ coefficients when defining $V_{\rho}$,
and we will use these coefficients from here on.

The construction we discuss amounts to defining a differential
$d_{v,\rho}$ on each $V_{\rho}$ (the small $v$ is meant to suggest
``vertical,'' in contrast with the ``horizontal'' maps of
$d_{odd}$). Setting $d_v = \sum_{\rho} d_{v,\rho}$, we consider the
complex $(C_*^{odd},d_v + d_{odd})$. In the remainder of this section,
we will define $d_{v,\rho}$ and show that $(d_v + d_{odd})^2 =
0$. When coefficients are taken modulo $2$, we will get the complex
from \cite{Jaeger}.

Fix a resolution $\rho$ with circles $a_1, \ldots, a_k$. The marks
$m_i$ on $D$ pass to marks on these circles. For each $i$, define $w_i
\in R$ to be the sum of the variables $x_j$ corresponding to those
marks $m_j$ lying on $a_i$. As a preliminary definition, define
\[
d'_{v,\rho} = \sum_i m(w_i a_i, \cdot),
\]
where $m(\cdot, \cdot)$ denotes multiplication in the ring
$V_{\rho}$. (Note that the multiplication is anti-commutative, so
order matters, and we are multiplying by $w_i a_i$ on the left.) It is
clear that $(d'_{v,\rho})^2 = 0$, and the same holds for $d'_v =
\sum_{\rho} d'_{v,\rho}$. Taking coefficients modulo $2$, we get the
twisted complex in Jaeger's form (\cite{Jaeger}): his
dot-multiplication maps correspond to our left multiplication.

The definition of $d'_{v,\rho}$ is only preliminary since we will need
to modify $d'_{v,\rho}$ by an overall sign, depending on $\rho$. We
now describe this modification. It will be based on the following
lemma:

\begin{lemma}\label{naivedv} Write $d'_{odd} = d'_{odd,split} +
  d'_{odd,join}$. Then $d'_v$ anticommutes with $d'_{odd,split}$ and
  commutes with $d'_{odd,join}$.
\end{lemma}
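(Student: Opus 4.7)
The plan is to reduce the claim to a local check along each edge of the cube of resolutions. Since $d'_{odd,split}$ and $d'_{odd,join}$ are each sums of edge maps, one per edge of the cube, and since $d'_v = \sum_\rho d'_{v,\rho}$ preserves each resolution, it suffices to fix an edge $e\colon V_\rho \to V_{\rho'}$ and check that $d'_{v,\rho'}\circ e = -e\circ d'_{v,\rho}$ for split edges and $d'_{v,\rho'}\circ e = e\circ d'_{v,\rho}$ for merge edges.

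For each such edge the map $e$ only alters the circles involved in the crossing; the remaining (spectator) circles appear identically in $\rho$ and $\rho'$. Accordingly I would decompose $V_\rho \cong V_\rho^{\text{loc}}\otimes V^{\text{sp}}$ and $V_{\rho'} \cong V_{\rho'}^{\text{loc}}\otimes V^{\text{sp}}$, so that $e$ acts as a local map tensored with the identity on the spectator factor. The contributions to $d'_v$ coming from spectator circles are the same on both sides and interact with $e$ the same way; one just has to track the signs forced by supercommutativity of the exterior algebras, which agree on both compositions. The nontrivial check is on the local factor.

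The crucial numerical input is that marks behave additively under a split or merge: for a split of $a$ into $a_1,a_2$ we have $w_a=w_1+w_2$, and for a merge of $a_1,a_2$ into $c$ we have $w_c=w_1+w_2$. Using the naive forms of the edge maps coming from the chosen crossing orientations (up to a global sign, the split sends $1\mapsto a_1-a_2$ and $a\mapsto a_1a_2$, while the merge sends $1\mapsto 1$, $a_i\mapsto c$, and $a_1a_2\mapsto 0$), I would directly evaluate both compositions on a basis of $V_\rho^{\text{loc}}$. The sign $-1$ in the split case falls out of the identity $a_1(a_1-a_2)=-a_1a_2=a_2(a_1-a_2)$, while the merge case is immediate from $w_1 c+w_2 c=(w_1+w_2)c=w_c c$.

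The main obstacle is bookkeeping the signs produced by commuting through the spectator factor and by moving left multiplications by various $w_i a_i$ past one another; these signs have to conspire with the orientation-dependent signs built into the naive edge maps. The reason the two statements of the lemma differ is a parity difference between the two types of edge maps: the naive split raises the exterior degree by $1$, so commuting it past the degree-$1$ operator $d'_v$ costs an extra sign, whereas the naive merge preserves exterior degree and so commutes cleanly with $d'_v$.
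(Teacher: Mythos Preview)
Your proposal is correct and follows essentially the same edge-by-edge verification as the paper, which simply evaluates both compositions on the generators $\pi$ and $a\pi$ (for $\pi$ a product of passive circles) without your local/spectator tensor decomposition or your degree-parity explanation for why split and merge behave differently. Your explicit use of the additivity $w_a=w_1+w_2$ to handle the active-circle contributions to $d'_v$ is in fact a point the paper's displayed computation glosses over by summing only over passive circles, but the underlying argument is the same.
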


\begin{proof} We will do the split case and leave the (very similar)
  join case to the reader. Consider two resolutions $\rho$ and $\rho'$
  with a nonzero component of $d'_{odd,split}$ between them. Then
  $\rho'$ is obtained from $\rho$ by splitting one circle $a$ into two
  circles $b$ and $c$. Choose the labels $b$ and $c$ such that
  $d'_{odd}(a) = bc$. Denote the passive circles in $\rho$ by
  $\{p_i\}$; then the remaining circles in $\rho'$ may also be
  labelled $\{p_i\}$.

  Homogeneous generators of $V_{\rho}$ take the form $\pi$ or $a\pi$,
  where here $\pi$ denotes any product of the $p_i$. In the following
  computation, all sums over $q$ indicate sums over those passive
  circles $q$ which are not contained in $\pi$.
  \begin{align*}
    d'_v d'_{odd}(\pi) &= d'_v(b-c)\pi = \sum_q w_q q(b-c)\pi,
  \end{align*}
  while
  \begin{align*}
    d'_{odd} d'_v(\pi) &= d'_{odd} (\sum_q w_q q \pi) = \sum_q w_q (b-c)q \pi \\
    &= - d'_v d'_{odd}(\pi).
  \end{align*}
  Similarly,
  \begin{align*}
    d'_v d'_{odd}(a\pi) &= d'_v(bc\pi) = \sum_q w_q qbc\pi
  \end{align*}
  while
  \begin{align*}
    d'_{odd} d'_v(a\pi) &= d'_{odd} (\sum_q w_q qa\pi) = -d'_{odd}(\sum_q w_q aq\pi) = -\sum_q w_q bcq\pi = -\sum_q w_q qbc\pi \\
    &= -d'_v d'_{odd}(a\pi).
  \end{align*}
\end{proof}

The preceding lemma tells us that, to properly define $d_v$, we should
flip the signs on $d'_{v,\rho}$ for some vertices $\rho$ of the cube
of resolutions. For each split edge of the cube, we want the endpoints
to receive the same sign-change, while for each join edge, we want the
endpoints to receive the opposite sign-change. As with odd Khovanov
homology, a cohomological argument allows us to make these choices:

\begin{proposition} It is possible to flip the signs on some of the
  $d'_{v,\rho}$ such that the above conditions hold, and the flips are
  unique up to an overall sign.
\end{proposition}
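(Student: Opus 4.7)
The plan is to recast the sign-flip problem as a coboundary computation on the cube of resolutions, in direct analogy with the sign assignment used for $d_{odd}$ itself. Encode a choice of flips by a function $\epsilon : \{0,1\}^n \to \{\pm 1\}$, where $\epsilon(\rho) = -1$ means we flip the sign of $d'_{v,\rho}$. Writing $\epsilon = (-1)^f$ for some $f : \{0,1\}^n \to \mathbb{F}_2$, and defining a $1$-cochain $c \in C^1(\{0,1\}^n;\mathbb{F}_2)$ by $c(e) = 0$ on each split edge and $c(e) = 1$ on each join edge, the two conditions of the proposition translate into the single equation $\delta f = c$ on the cube viewed as a CW complex.

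First I would verify that $c$ is a cocycle, i.e.\ that $\delta c = 0$ on each $2$-face. This is the main (and essentially only) combinatorial obstacle. A $2$-face of the cube corresponds to toggling two distinguished crossings while fixing the rest. On each of its four edges the number of circles in the resolution changes by $+1$ (split) or $-1$ (join); going around the boundary of the face these changes must sum to $0$, so the number of split edges equals the number of join edges, and since there are four edges in total there are exactly two of each. In particular the number of join edges on each $2$-face is even, giving $\delta c = 0$.

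Once this is in hand the remainder is immediate. The cube $\{0,1\}^n$ is contractible, so $H^1(\{0,1\}^n;\mathbb{F}_2) = 0$, and every $1$-cocycle is a coboundary; hence there exists $f$ with $\delta f = c$, producing a valid $\epsilon$ and proving existence. For uniqueness, any two solutions $f, f'$ satisfy $\delta(f - f') = 0$, and since $\{0,1\}^n$ is connected the only such functions are the constant ones, so $\epsilon$ is determined up to multiplication by a global sign. No further computation is required beyond the $2$-face count above, which I expect to be the step most deserving of care.
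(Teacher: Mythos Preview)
Your cohomological framing is exactly the paper's approach: define a $\mathbb{F}_2$-valued $1$-cochain on the cube recording join vs.\ split, show it is a cocycle, and use contractibility. The existence and uniqueness portions are fine.

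The one step you flagged as ``most deserving of care'' is where you slip. It is \emph{not} true that on every $2$-face the number of split edges equals the number of join edges. For instance, if the two arcs lie on a single circle and are unnested, all four edges are splits; if the two arcs join three distinct circles in a chain, all four edges are joins. The error is in the sentence ``going around the boundary of the face these changes must sum to $0$'': when you traverse the boundary as a closed loop you reverse two of the four edges, so what actually vanishes is $\epsilon_1+\epsilon_2-\epsilon_3-\epsilon_4$ (with $\epsilon_i=\pm 1$ recording split/join along each edge in its natural $0\to 1$ direction), not $\epsilon_1+\epsilon_2+\epsilon_3+\epsilon_4$.

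The fix is immediate and your conclusion survives: from $\epsilon_1+\epsilon_2=\epsilon_3+\epsilon_4$ one gets $\sum_i\epsilon_i=2(\epsilon_1+\epsilon_2)\in\{-4,0,4\}$, so the number of join edges, namely $(4-\sum_i\epsilon_i)/2$, lies in $\{0,2,4\}$ and is even. Hence $\delta c=0$ as you wanted. (The paper instead verifies $\delta c=0$ by enumerating the five unoriented $2$-dimensional configurations and checking each; your parity argument, once corrected, is a cleaner way to reach the same conclusion.)
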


\begin{figure}
  \begin{center}
    \includegraphics[scale=0.3]{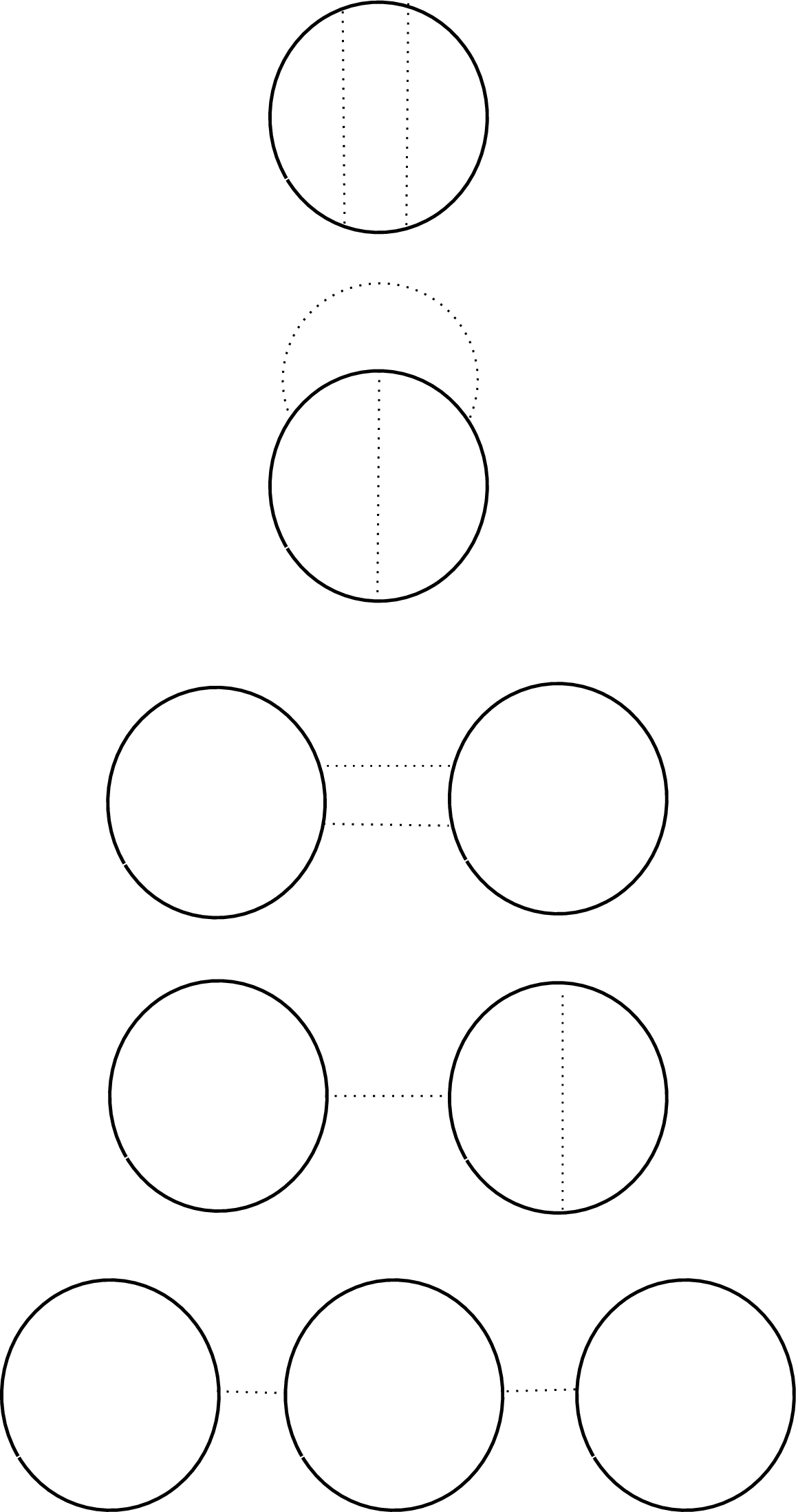}
  \end{center}
  \caption{The five types of unoriented 2-dimensional
    configurations. Dotted lines indicate the arcs of the each
    configuration.}
  \label{unoriented}
\end{figure}

\begin{proof} Consider the usual CW structure of the cube $Q =
  [0,1]^n$ of resolutions, with the $k$-skeleton consisting of the
  $k$-dimensional faces. Define a cochain $\tau \in C^1(Q; \Z/2\Z)$ by
  labelling split edges $0$ and join edges $1$. We want to show that
  $\tau$ is a coboundary, but since $Q$ is contractible, it suffices
  to show $\tau$ is a cocycle. To compute $\delta \tau$, we look at
  the $2$-dimensional faces of $Q$. There are five types of these,
  corresponding to the five possible unoriented 2-dimensional
  configurations. These are shown in Figure~\ref{unoriented}. For each
  of these five configurations, the sum of $\tau$ along the boundary
  edges of the corresponding $2$-dimensional face is $0$ (mod $2$);
  this can easily be checked. Hence $\delta \tau = 0$, so $\tau =
  \delta \sigma$ for some $\sigma \in C^0(Q; \Z/2\Z)$. Flip the sign
  on a vertex $\rho$ if $\sigma(\rho) = 1$, and leave it alone if
  $\sigma(\rho) = 0$. Note that $\sigma$ is unique up to an overall
  sign since $H^0(Q; \Z/2\Z) = \Z/2\Z$.
\end{proof}

If we define $d_v$ by making the appropriate sign flips, then $d_v$
anticommutes with $d'_{odd}$, and of course $d_v^2 = 0$. Finally,
correct the signs in $d'_{odd}$ as in standard odd Khovanov homology
(using another cohomological argument; see \cite{Odd}). It is still
true that $d_v$ anticommutes with $d_{odd}$, so we have $(d_v +
d_{odd})^2 = 0$. This completes the construction of a complex which
reduces to Jaeger's twisted complex modulo 2.

\begin{remark} If we have a basepoint on our link, we can define
  reduced versions of everything above in the standard way.
\end{remark}

\subsection{Invariance.}
The homotopy invariance of $(C_*^{odd},d_v + d_{odd})$ under the
Reidemeister moves can be proved using an argument of Baldwin, similar
to that used in \cite{Baldwin}. For R1 and R2, one writes down the
complexes before and after the move, and then cancels differentials in
the ``before'' complex to obtain the ``after'' complex. Once
invariance under R2 is proven, invariance under R3 amounts to
considering the braid word $xyxy^{-1}x^{-1}y^{-1}$, where $x$ and $y$
are elementary $3$-strand braid group generators, and showing its
appearance in a Khovanov complex is equivalent to the identity. The
relevant ``before'' complex comes from a $64$-vertex cube of
resolutions which was dealt with in \cite{Baldwin}. In our case, the
presence of $d_v$ does not make anything harder for R2 or R3, and R1
does not change much either. In this section we will briefly sketch
the proof for invariance under R1.

\begin{figure}
  \centering
  \labellist
  \small \hair 2pt \pinlabel {Before R1} at 75 62 \pinlabel {After R1}
  at 248 62
  \endlabellist
  \includegraphics[scale=0.5]{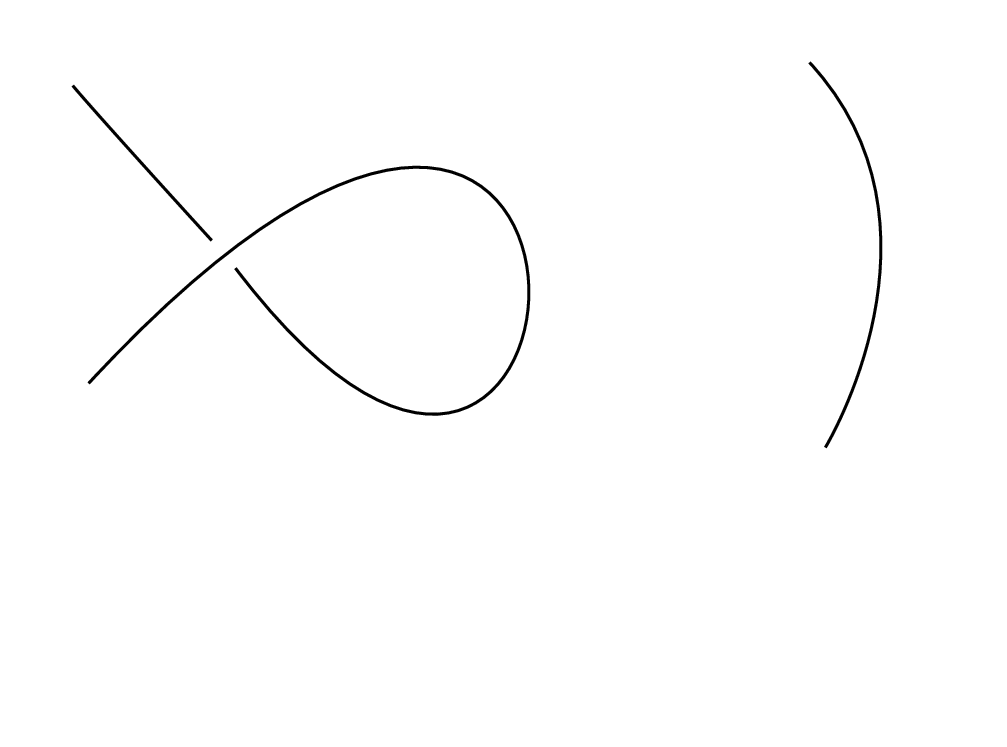}
  \caption{The R1 move.}
  \label{r1}
\end{figure}

Consider an R1 move which undoes a positive kink; see
Figure~\ref{r1}. Write $C_*^{before}$ for the complex before undoing
the kink and $C_*^{after}$ for the complex after performing
R1. Without regard to the differential, $C_*^{before}$ is the sum of
three pieces: $C_*^{before} = C_{0,+} \oplus C_{0,-} \oplus C_1$,
where the $0$ or $1$ indicates the resolution at the crossing in
question and the $+$ (resp. $-$) denotes those generators represented
by monomials not containing (resp. containing) the isolated small
circle in the $0$-resolution. In each of the three local pictures in
question, label the non-closed component as $a$ and the isolated small
circle (in the $0$-pictures) as $b$. Denote the differential from
$C_{0,\pm}$ to $C_1$ by $d_\pm$.

In fact, $d_+$ maps generators of $C_{0,+}$ bijectively to generators
of $C_1$. We would like to cancel all components of $d_+$, leaving
ourselves with $C_{0,-}$ and an induced differential on this
summand. There are some obvious components of this differential,
namely those coming from components internal to $C_{0,-}$ in the whole
complex $C_*^{before}$. These components correspond to almost all of
the differential on $C_*^{after}$ under the bijection sending a
generator $p$ of $C_*^{after}$ to $bp$ in $C_{0,-}$. The only things
missing are the vertical differentials from marks on $b$. We want to
show that the induced differential from cancellation precisely adds in
these missing components.

Indeed, any induced components come from compositions
\[
\xymatrix{C_{0,-} \ar[r]^{d_-} & C_1 \ar[r]^{d_+^{-1}} & C_{0,+}
  \ar[r]^{d_{v,b}} & C_{0,-}}
\]
where the final map comes from the component of $d_v$ associated to
marks on $b$. The map $d_-$ (a join) is nonzero only on elements of
the form $bp$, where $p$ does not contain $a$. We have $d_-(bp) = \pm
ap$, but applying $d_+^{-1}$, the sign cancels and we get $ap \in
C_{0,+}$. Applying $d_{v,b}$ gives us $w \cdot bap$, where $w$ is the
sum of the weights of marks on $b$. This was precisely the component
we were looking for, and invariance under R1 follows.

\section{Relation with odd Khovanov homology.}\label{movemarksec}
Jaeger shows that for knots, his (reduced) complex actually computes
reduced Khovanov homology. We would like to do the same with reduced
odd Khovanov homology. Suppose $L$ is actually a knot $K$ with
basepoint $p$. Following \cite{Jaeger}, the main point is that we can
move marks past crossings without changing the isomorphism type of the
twisted complex. Consider a mark $m$ near a crossing $c$ of $D$, with
local picture \marktop. Let $D'$ be the marked diagram with this local
picture replaced by \markbot. We may assume that $c$ is oriented as in
Figure~\ref{crossing} and that the crossing orientations on $D'$ are
the same as $D$.

\begin{figure}
  \centering
  \includegraphics[scale=0.3]{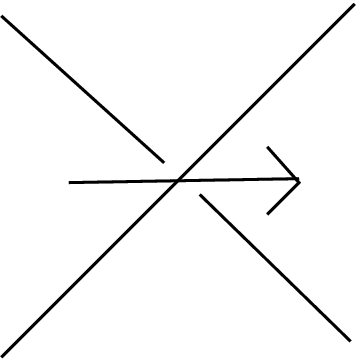}
  \caption{Orientation at the crossing $c$.}
  \label{crossing}
\end{figure}

\begin{theorem}\label{movemark}
  The twisted complexes associated to $D$ and to $D'$ are isomorphic.
\end{theorem}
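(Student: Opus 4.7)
The plan is to construct an explicit chain isomorphism $\Phi : (C_*^{odd}(D), d_v + d_{odd}) \to (C_*^{odd}(D'), d_v + d_{odd})$ lifting Jaeger's mod-$2$ isomorphism to the signed setting. Since $D$ and $D'$ share the same underlying diagram and crossing orientations, the chain modules $V_\rho$ and the horizontal differential $d_{odd}$ are literally identical; only the vertical differential changes. For each resolution $\rho$, let $\alpha_\rho$ (resp.\ $\beta_\rho$) denote the circle of $\rho$ containing the NW arc (resp.\ SE arc) at $c$, and write $\eta_\rho = \sum_i w_i a_i$ and $\eta'_\rho = \sum_i w'_i a_i$. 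When $\alpha_\rho = \beta_\rho$ globally the weights agree; otherwise $\eta'_\rho = \eta_\rho + x_m(\beta_\rho - \alpha_\rho)$.

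My first step is to define $\Phi$ vertex-wise, $\Phi = \bigoplus_\rho \Phi_\rho$, taking $\Phi_\rho = \mathrm{id}$ when $\alpha_\rho = \beta_\rho$ and $\Phi_\rho = \mathrm{id} + x_m \Psi_\rho$ otherwise. Here $\Psi_\rho$ is a degree-preserving $R$-linear operator built from multiplications by and contractions with $\alpha_\rho, \beta_\rho$ (a natural ansatz is a sum of Koszul-style pieces like $m(\beta_\rho)\iota_{\alpha_\rho}$ and $m(\alpha_\rho)\iota_{\beta_\rho}$), whose exact form is to be pinned down by forcing $\Phi_\rho \circ d_{v,\rho} = \tilde d_{v,\rho} \circ \Phi_\rho$ using the identity $\eta'_\rho = \eta_\rho + x_m(\beta_\rho - \alpha_\rho)$. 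Nilpotence of $\Psi_\rho$ (which follows from $\alpha_\rho^2 = \beta_\rho^2 = 0$) ensures $\Phi_\rho$ is invertible, so vertex-wise $\Phi$ is already an isomorphism of modules.

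The main step is then to verify $\Phi \circ d_{odd} = d_{odd} \circ \Phi$, which decomposes edge by edge along the cube of resolutions. I would organize the case analysis according to (i) whether the edge in question is a split or a merge, and (ii) how $\alpha_\rho, \beta_\rho$ interact with the circle being split or merged, particularly whether the equality $\alpha_\rho = \beta_\rho$ changes across the edge. The main obstacle will be sign bookkeeping: unlike in Jaeger's mod-$2$ situation, one must track the signs from the naive split and merge maps (coming from the orientation at $c$ and at neighboring crossings) together with the sign corrections fixed by the $X$/$Y$ convention of Figure~\ref{XandY} and by the odd-Khovanov sign assignment. My plan is to verify compatibility on one representative edge per local configuration type, and then invoke the cohomological rigidity of Lemma~\ref{naivedv} together with its odd-Khovanov analogue to propagate these checks across the cube.
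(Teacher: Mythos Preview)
Your proposed isomorphism is diagonal in the cube of resolutions, and that is where the approach breaks. Take a crossing $e\neq c$ and a complete resolution $\rho$ in which the two local arcs at $c$ lie on the same circle $a$; by your recipe $\Phi_\rho=\id$. Suppose the edge at $e$ splits $a$ into $\alpha=\alpha_{\rho'}$ and $\beta=\beta_{\rho'}$. Commutation with $d_{odd}$ along this edge forces $\Phi_{\rho'}$ to fix the image of the split, in particular $\Phi_{\rho'}(\alpha-\beta)=\alpha-\beta$. On the other hand, from $\Phi_{\rho'}(1)=1$ (forced by your ansatz $\id+x_m\Psi_{\rho'}$ in degree zero) and the intertwining with $d_v$, you get $\Phi_{\rho'}(w_\alpha\alpha+w_\beta\beta)=(w_\alpha-x_m)\alpha+(w_\beta+x_m)\beta$. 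Solving these two linear conditions for the degree-one part of $\Phi_{\rho'}$ requires entries of the form $1-x_m/(w_\alpha+w_\beta)$, which do not lie in $R=\Z[x_i]$ as soon as the circle $a$ carries any mark besides $m$. So no degree-preserving $R$-linear vertex-wise map exists in general, and no choice of $\Psi_{\rho'}$ built from multiplications and contractions will repair this.

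The paper's isomorphism is \emph{not} diagonal. For each outer resolution $\rho$ (of the crossings other than $c$) it has the upper-triangular form $F_\rho=\id+H_\rho$, where $H_\rho$ is the backwards saddle $V_{(\rho,1)}\to V_{(\rho,0)}$ at $c$, scaled by the variable of $m$ and given a sign $\sigma(H_\rho)=\sigma(d_{c,\rho})\cdot\sigma(\rho,1)$ that mixes the odd-Khovanov edge sign at $c$ with the vertex sign used to define $d_v$. The real work is then checking that $H$ commutes with the $d_{odd}$-components at each other crossing $e$: this is done by comparing, case by case, the two-dimensional configuration of $c$ and $e$ with its ``backward'' partner obtained by rotating the arc at $c$, and verifying a parity identity (Lemma~\ref{configs}). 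That case analysis is the substitute for your proposed appeal to Lemma~\ref{naivedv}; the latter is a single commutation computation, not a rigidity principle that lets one propagate a check from one edge to all edges. Your plan would need to be rebuilt around this off-diagonal saddle map and the forward/backward configuration comparison.
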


An analogous statement holds when sliding a mark over a crossing,
rather than under. As in \cite{Jaeger}, Theorem~\ref{movemark} (plus
the analogous statement) immediately implies that the reduced twisted
complex computes reduced odd Khovanov homology for knots. (More
precisely, it computes Khovanov homology tensored with
$\Z[x_i]$). Indeed, one can simply move all the marks to the same edge
as the basepoint, effectively killing $d_v$ and leaving only $d_{odd}$
in the differential. We will now prove Theorem~\ref{movemark}.

\begin{proof}
  In \cite{Jaeger}, with coefficients taken modulo 2, this theorem is
  a purely local computation. Unfortunately, in odd Khovanov homology,
  signs on maps are not determined entirely by local data, so we must
  work a bit harder. Still, Jaeger's chain map (with appropriate
  signs) will work in our situation.

  To define the map, it will be convenient to follow \cite{Jaeger} and
  use local pictures. In this notation, the complex of $D$ will be
  written $\vid \oplus \hid$, where each summand actually represents
  all summands of the total complex of $D$ whose resolution at $c$ is
  as depicted. There is one such summand for each ``outer''
  resolution, i.e. each resolution $\rho$ of all crossings except
  $c$. The complex of $D'$ will also be written as $\vid \oplus \hid$,
  with the same interpretation.

  Fix an outer resolution $\rho$. With respect to the summands \vid
  and \hid, the isomorphism of complexes is given by $F_{\rho}
  := \begin{pmatrix} \vid & \pm \htov \\ & \hid \end{pmatrix}: \vid
  \oplus \hid \to \vid \oplus \hid$. The sign in this formula will
  depend on $\rho$ in a way that will be specified below. Regardless
  of the chosen sign, it is clear that each $F_{\rho}$ is invertible;
  the inverse is the same map with opposite sign. Set $F = \sum_{\rho}
  F_{\rho}$. Then $F$ is invertible, and we only need show that $F$
  commutes with $d_v$ and $d_{odd}$. We may write $F_{\rho} = \id +
  H_{\rho}$, and this will be useful later.

  To specify the sign on $H_{\rho}$, consider
  $(C_*^{odd}(D),d_{odd})$, which is the same as
  $(C_*^{odd}(D'),d_{odd})$ since the only difference between $D$ and
  $D'$ is the placement of a mark. There is a component $d_{c,\rho}$
  of $d_{odd}$ coming from $c$. It has a naive sign from the
  orientation on $c$; write $\sigma(d_{c,\rho}) = 0$ if the actual
  sign agrees with the naive sign and $\sigma(d_{c,\rho}) = 1$
  otherwise. In the process of defining $d_v$ above, we also put signs
  on certain vertices of the complete cube of resolutions. There are
  two such vertices associated to $\rho$; call them $(\rho,0)$ and
  $(\rho,1)$ where the $0$ or $1$ denotes the resolution of $c$. Write
  $\sigma(\rho,i) = 0$ if we did not flip the sign on
  $d_{v,(\rho,i)}$, and write $\sigma(\rho,i) = 1$ if we did. Define
  the sign on $H_{\rho}$ to be
  \begin{equation}\label{hsign}
    \sigma(H_{\rho}) := \sigma(d_{c,\rho}) \cdot \sigma(\rho,1).
  \end{equation}

  To show $F$ is a chain map, we will first consider those components
  of $d_v$ and $d_{odd}$ which correspond to the mark $m$ and the
  crossing in the local picture \marktop (or \markbot). These are the
  components Jaeger deals with in \cite{Jaeger}. The relevant
  commutative diagram in his paper also works in our situation, once
  it is suitably interpreted, and once signs are added.

  \begin{center}
    \scalebox{0.85}{
      $\xymatrix@R=1.5cm@C=3cm{ \vid \oplus \hid \ar[r]^{\begin{pmatrix} w \, \vld \\
            \vtoh & w \, \htd \end{pmatrix}}
        \ar@<0.3ex>[d]_{\begin{pmatrix} \vid & w \htov \\ &
            \hid \end{pmatrix}} & \vid \oplus \hid
        \ar@<0.3ex>[d]^{\begin{pmatrix} \vid & w \htov \\ &
            \hid \end{pmatrix}}
        \\
        \vid \oplus \hid \ar[r]_{\begin{pmatrix} w \, \vrd \\ \vtoh &
            w \, \hbd \end{pmatrix}} & \vid \oplus \hid }$

      $\xymatrix@R=1.5cm@C=3cm{ \vid \oplus \hid \ar[r]^{\begin{pmatrix} w \, \vld \\
            -\vtoh & w \, \htd \end{pmatrix}}
        \ar@<0.3ex>[d]_{\begin{pmatrix} \vid & -w \htov \\ &
            \hid \end{pmatrix}} & \vid \oplus \hid
        \ar@<0.3ex>[d]^{\begin{pmatrix} \vid & - w \htov \\ &
            \hid \end{pmatrix}}
        \\
        \vid \oplus \hid \ar[r]_{\begin{pmatrix} w \, \vrd \\ -\vtoh &
            w \, \hbd \end{pmatrix}} & \vid \oplus \hid }$}
  \end{center}

  \begin{center}
    \scalebox{0.8}{
      $\xymatrix@R=1.5cm@C=3cm{ \vid \oplus \hid \ar[r]^{\begin{pmatrix} w \, \vld \\
            \vtoh & -w \, \htd \end{pmatrix}}
        \ar@<0.3ex>[d]_{\begin{pmatrix} \vid & -w \htov \\ &
            \hid \end{pmatrix}} & \vid \oplus \hid
        \ar@<0.3ex>[d]^{\begin{pmatrix} \vid & -w \htov \\ &
            \hid \end{pmatrix}}
        \\
        \vid \oplus \hid \ar[r]_{\begin{pmatrix} w \, \vrd \\ \vtoh &
            -w \, \hbd \end{pmatrix}} & \vid \oplus \hid }$

      $\xymatrix@R=1.5cm@C=3cm{ \vid \oplus \hid \ar[r]^{\begin{pmatrix} w \, \vld \\
            -\vtoh & -w \, \htd \end{pmatrix}}
        \ar@<0.3ex>[d]_{\begin{pmatrix} \vid & w \htov \\ &
            \hid \end{pmatrix}} & \vid \oplus \hid
        \ar@<0.3ex>[d]^{\begin{pmatrix} \vid & w \htov \\ &
            \hid \end{pmatrix}}
        \\
        \vid \oplus \hid \ar[r]_{\begin{pmatrix} w \, \vrd \\ -\vtoh &
            -w \, \hbd \end{pmatrix}} & \vid \oplus \hid }$ }
  \end{center}

  Above are four copies of the diagram from \cite{Jaeger} with signs
  added. They cover the possible cases when $\sigma(\rho,0) = 0$ (in
  other words, all cases in which the upper-left entries of the
  horizontal maps have a $+$ sign). It turns out that, once we
  consider the case $\sigma(\rho,0) = 0$, the case $\sigma(\rho,0) =
  1$ involves the same set of matrix multiplications, up to an overall
  sign. The diagrams on the left have $\sigma(d_{c,\rho}) = 0$ and the
  ones on the right have $\sigma(d_{c,\rho}) = 1$. In the top
  diagrams, $\sigma(\rho,1) = 0$ (so the crossing change at $c$ is a
  split), and in the bottom ones, $\sigma(\rho,1) = 1$ (so the
  crossing change is a join). On the horizontal maps, a dot represents
  left multiplication by whichever circle contains the dot. The
  vertical maps ($H_{\rho}$) have signs as specified in
  Equation~\eqref{hsign}.

  The reader may check, by multiplying matrices, that the diagrams
  above do commute. A few relations will be needed. First, \vtoh \,
  $\circ$ \htov \, = \htd \, $-$ \hbd, and this holds regardless of
  whether the crossing change is a split or a join. (One sees this by
  explicitly writing down the maps when the first cobordism is a split
  and when it is a join). Analogously, \htov \,$\circ$ \vtoh \, = \vld
  \,$-$ \vrd.

  There are also relations depending on whether the crossing change at
  $c$ is a split (as in the top two diagrams) or a join (as in the
  bottom two diagrams). When the crossing change is a split, we have
  \vrd \,$\circ$ \htov \, = \htov \,$\circ$ \htd, as well as \vld \, =
  \vrd since the dots are on the same circle. When it is a join, we
  have the relations \vrd \, $\circ$ \htov \,$=$ $-$\htov \, $\circ$
  \htd and \hbd \, = \htd. With these relations, one can see that the
  four diagrams above do commute. There are four more diagrams to
  consider, with $\sigma(\rho,0) = 1$, but these computations follow
  from the same set of matrix multiplications. The only difference is
  that some matrices pick up an overall factor of $-1$.

  Next we want to show that $F$ commutes with those components of
  $d_v$ corresponding to marks outside the local crossing picture. We
  may fix an outer resolution $\rho$. Writing $F_{\rho} = \id +
  H_{\rho}$, we can restrict attention to $H_{\rho}$, because $\id$
  commutes with everything outside the local picture. There are
  several cases to consider, and each is an easy algebraic
  computation. We will consider the cases when the crossing change at
  $c$ is a join; the split case is very similar.

  First of all, note that since $\rho$ is fixed, it does not matter
  here which sign was assigned to $H_{\rho}$, so we may assume the
  sign is positive.

  The crossing change at $c$ is a join or a split. Assuming it is a
  join, by our earlier construction, we applied a sign change either
  to $d'_v$ before the crossing change or after (not both); in other
  words, $\sigma(\rho,0) \neq \sigma(\rho,1)$. Hence we want to show
  that $H_{\rho}$ anticommutes with the relevant components of $d'_v$
  (recall that the $'$ indicates the naive signs). The domain of
  $H_{\rho}$ is the resolution $(\rho,1)$. This has one circle, say
  $a$, which intersects the local crossing picture, and possibly
  several other circles (say $\{p_i\}$). $H_{\rho}$ splits $a$ into
  two circles, say $b$ and $c$ (chosen so that $H_{\rho}(a) =
  bc$). Let $\pi$ be a monomial in the $p_i$. Let $m$ be a mark
  outside the local picture; $m$ may lie on any circle $q$ of the
  resolution (possibly $a$). Let $w$ denote the variable associated to
  $m$, and let $d'_{v,q}$ be the component of $d'_v$ coming from $q$.

  If $q$ is not $a$, then
  \[
  d'_{v,q}(H_{\rho}(\pi)) = d'_{v,q}((b-c)\pi) = wq(b-c)\pi,
  \]
  while
  \[
  H_{\rho}(d'_{v,q}(\pi)) = H_{\rho}(wq\pi) = w(b-c)q\pi,
  \]
  which equals $-d'_{v,q}(H_{\rho}(\pi))$ as desired. Similarly,
  \[
  d'_{v,q}(H_{\rho}(a\pi)) = d'_{v,q}(bc\pi) = wqbc\pi,
  \]
  while
  \[
  H_{\rho}(d'_{v,q}(a\pi)) = H_{\rho}(wqa\pi) = -H_{\rho}(waq\pi) =
  -wbcq\pi,
  \]
  which is $-d'_{v,q}(H_{\rho}(a\pi))$ as desired.

  Now, suppose $q = a$ and the mark $m$ lies on $b$ after the
  split. We have
  \[
  d'_{v,q}(H_{\rho}(\pi)) = d'_{v,q}((b-c)\pi) = -wbc\pi,
  \]
  while
  \[
  H_{\rho}(d'_{v,q})(\pi) = H_{\rho}(wa\pi) = wbc\pi,
  \]
  which is $-d'_{v,q}(H_{\rho}(\pi))$. For $a\pi$, the composition
  either way gives zero. If the mark insteads lies on $c$ after the
  split, the computation is exactly analogous, so we have finished
  with the case where the crossing change at $c$ is a join.

  The split case is very similar. We may assume no sign changes were
  applied to $d'_v$ before or after the crossing; since
  $\sigma(\rho,0) = \sigma(\rho,1)$, we may assume both are zero. We
  now want to show $H$ commutes with $d'_v$. We must consider marks on
  the top local circle, on the bottom local circle, and disjoint from
  either circle. After doing the computations in each case, we see
  that $H$ commutes with $d_v$.

  Our final task is to show $F$ (or equivalently $H$) commutes with
  $d_{odd}$, and we need only consider components of $d_{odd}$ coming
  from crossings outside our local picture. Consider an external
  crossing $e$, and fix a resolution of all the other external
  crossings. We get two outer resolutions $\rho$ and $\rho'$, where in
  $\rho$ we take the $0$-resolution at $e$ and in $\rho'$ we take the
  $1$-resolution. Let $\Delta\sigma(H) = \sigma(H_{\rho}) -
  \sigma(H_{\rho'})$ (taken modulo 2). Let $\Delta\sigma(e) =
  \sigma(d_{e,\rho}) - \sigma(d_{e,\rho'})$ with notation similar to
  above. We may define $\Delta\sigma(c)$ analogously.

  Next consider the two-dimensional configuration generated by $c$ and
  $e$ (with all other crossings resolved as we decided above). Call
  this $f$, the ``forward'' configuration. There is a square
  associated to $f$; its sides come from $d'_c$ and $d'_e$ (without
  sign corrections).  Write $a_f = 0$ if it commutes and $a_f = 1$ if
  it anti-commutes; if it does both, use the convention specified in
  Section~\ref{construction}.

  From $f$ we can obtain a ``backward'' configuration by resolving $c$
  and replacing the corresponding oriented arc by one rotated 90
  degrees counter-clockwise. Call this configuration
  $b$. Figure~\ref{bubbles} has many examples of configurations and
  their backwards partners. For the backward configuration, we again
  get a square which either commutes (set $a_b = 0$) or anti-commutes
  (set $a_b = 1$). The sides of this square come from $d'_e$ and $H$
  (taken with the ``naive'' positive sign), since $H$ amounts to doing
  the crossing change at $c$ ``backwards.''

  What we want to show is that $\Delta\sigma(H) + \Delta\sigma(e) +
  a_b = 0$ modulo $2$. The following lemma allows us to do this:
  \begin{lemma}\label{configs}
    For a two-dimensional configuration associated to oriented
    crossings $c$ and $e$ in a diagram, define $f$, $b$, $a_f$, and
    $a_b$ as above ($a_f$ and $a_b$ have values modulo 2).
    \begin{enumerate}
    \item If the arc associated to $e$ is a split in the backward
      configuration $b$, then $a_f = a_b + 1$.
    \item If it is a join, then $a_f = a_b$.
    \end{enumerate}
  \end{lemma}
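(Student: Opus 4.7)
The plan is to prove Lemma~\ref{configs} by exhaustive case analysis on the five unoriented two-dimensional configurations of Figure~\ref{unoriented}, enriched with the allowed orientations of the arcs at $c$ and $e$. For each case I would (i) compute $a_f$ directly from the naive split/merge signs of $d'_{odd,c}$ and $d'_{odd,e}$ as in \cite{Odd}, (ii) form the backward configuration $b$ by fixing the resolution of $c$ and rotating its arc $90^\circ$ counter-clockwise, (iii) compute $a_b$ in the same way using the ``naive'' positive sign for $H$, and (iv) determine whether the $e$-arc in $b$ acts as a split or a join. The lemma is then a tabulation, for each of the (few) oriented configurations, of the triple $(a_f,a_b,\text{split/join of }e\text{ in }b)$.

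First I would set up uniform notation for the circles of the ambient resolution and record how they are merged or split by each of the two arc-surgeries. A convenient organizing principle is the interaction pattern of the two arcs: when they touch disjoint circles (the ``disjoint'' types in Figure~\ref{unoriented}), both $f$ and $b$ have commuting squares, and the split/join type of the $e$-arc is unchanged by the rotation at $c$, so the lemma reduces to $0=0$. In the linked-arc types the rotation of the $c$-arc alters precisely which pair of strands is glued at the crossing of the two arcs; tracing through the corresponding split/merge maps shows that flipping the role of $c$ toggles the parity of the square exactly when the $e$-arc in $b$ is a split, which is the desired statement.

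The delicate cases are the configurations $X$ and $Y$ of Figure~\ref{XandY}, where the forward square both commutes and anti-commutes and one must invoke the convention of Section~\ref{construction} declaring $X$ anti-commutative and $Y$ commutative. Rotating the $c$-arc by $90^\circ$ interchanges $X$ and $Y$, so these must be treated as a pair rather than independently, and I expect this to be the main obstacle. The content of the lemma in these two cases is that the convention chosen earlier is exactly what is required to make $a_f+a_b$ agree with the split/join parity of $e$ in $b$; this is the reason the paper emphasized earlier that the $X$/$Y$ convention matters here and cannot be replaced by its opposite. Once these two cases are verified, the remaining oriented configurations reduce to routine sign computations analogous to those in the proof of Lemma~\ref{naivedv}, and assembling them completes the proof.
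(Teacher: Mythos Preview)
Your plan is essentially the paper's own proof: the paper establishes Lemma~\ref{configs} by exhaustive case analysis, listing all oriented forward/backward configuration pairs in Figure~\ref{bubbles} with their ``comm./anti.'' labels and observing the claimed parity pattern directly (with the $X$/$Y$ convention singled out, just as you anticipate). Your proposed tabulation would reproduce exactly this.

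One of your organizing heuristics is not correct as stated, though. You assert that when the $c$- and $e$-arcs touch disjoint circles, both the forward and backward squares commute and the lemma ``reduces to $0=0$.'' Take the configuration where each arc is a split on its own circle. The forward square consists of two naive split maps, each given by left-wedging with a degree-one element, and these anticommute, so $a_f = 1$. In the backward configuration the rotated $c$-arc becomes a merge while $e$ remains a split on its (disjoint) circle; a merge and a disjoint split commute, so $a_b = 0$. Since $e$ is a split in $b$, the lemma predicts $a_f = a_b + 1$, i.e.\ $1 = 0+1$, not $0=0$. More generally, in the disjoint situation the split/join type of $e$ is indeed unchanged by the rotation at $c$, but that type can be either split or join, and the two cases land in different parts of the lemma. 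This does not undermine your overall plan---the honest tabulation you propose would catch it---but you should drop the shortcut and simply carry out the case-by-case verification, as the paper does.
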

  \begin{proof}
    There are several cases to be considered; in fact, a diagram is
    more useful than words here. Figure~\ref{bubbles} depicts the
    relevant 2-dimensional configurations. The left column shows the
    forward and backward configurations such that $e$ is a split in
    the backwards configuration. The right column does the same for
    configurations where $e$ is a join in the backwards
    configuration. All configurations are labeled with ``comm.'' if
    the corresponding square commutes and ``anti.''  if it does
    not. The content of the lemma is that these labels are correct
    (each is a simple verification), plus the fact that in the left
    column, a configuration and its backwards partner have opposite
    labels while in the right column they have the same labels. Note
    that our choice of convention for the configurations $X$ and $Y$
    of Figure~\ref{XandY} is needed for the lemma to hold.
  \end{proof}

  \begin{figure}
    \labellist
    \small \hair 2pt \pinlabel Anti. at 38 733 \pinlabel Anti. at 38
    631 \pinlabel Anti. at 38 550 \pinlabel Comm. at 10 416 \pinlabel
    Comm. at 10 350 \pinlabel Comm. at 10 290 \pinlabel Anti. at 10
    226 \pinlabel Comm. at 10 155 \pinlabel Comm. at 182 733 \pinlabel
    Comm. at 182 631 \pinlabel Comm. at 182 550 \pinlabel Anti. at 228
    447 \pinlabel Anti. at 160 350 \pinlabel Anti. at 160 290
    \pinlabel Comm. at 160 226 \pinlabel Anti. at 160 155 \pinlabel
    Comm. at 326 769 \pinlabel Comm. at 326 664 \pinlabel Comm. at 326
    562 \pinlabel Comm. at 325 458 \pinlabel Anti. at 324 315
    \pinlabel Comm. at 324 240 \pinlabel Comm. at 324 165 \pinlabel
    Comm. at 324 95 \pinlabel Comm. at 565 760 \pinlabel Comm. at 565
    668 \pinlabel Comm. at 565 568 \pinlabel Comm. at 550 458
    \pinlabel Anti. at 550 315 \pinlabel Comm. at 570 217 \pinlabel
    Comm. at 570 142 \pinlabel Comm. at 570 75 \pinlabel Forward at 88
    810 \pinlabel Backward at 228 810 \pinlabel Forward at 363 810
    \pinlabel Backward at 492 810
    \endlabellist
    \begin{center}
      \includegraphics[scale=0.7]{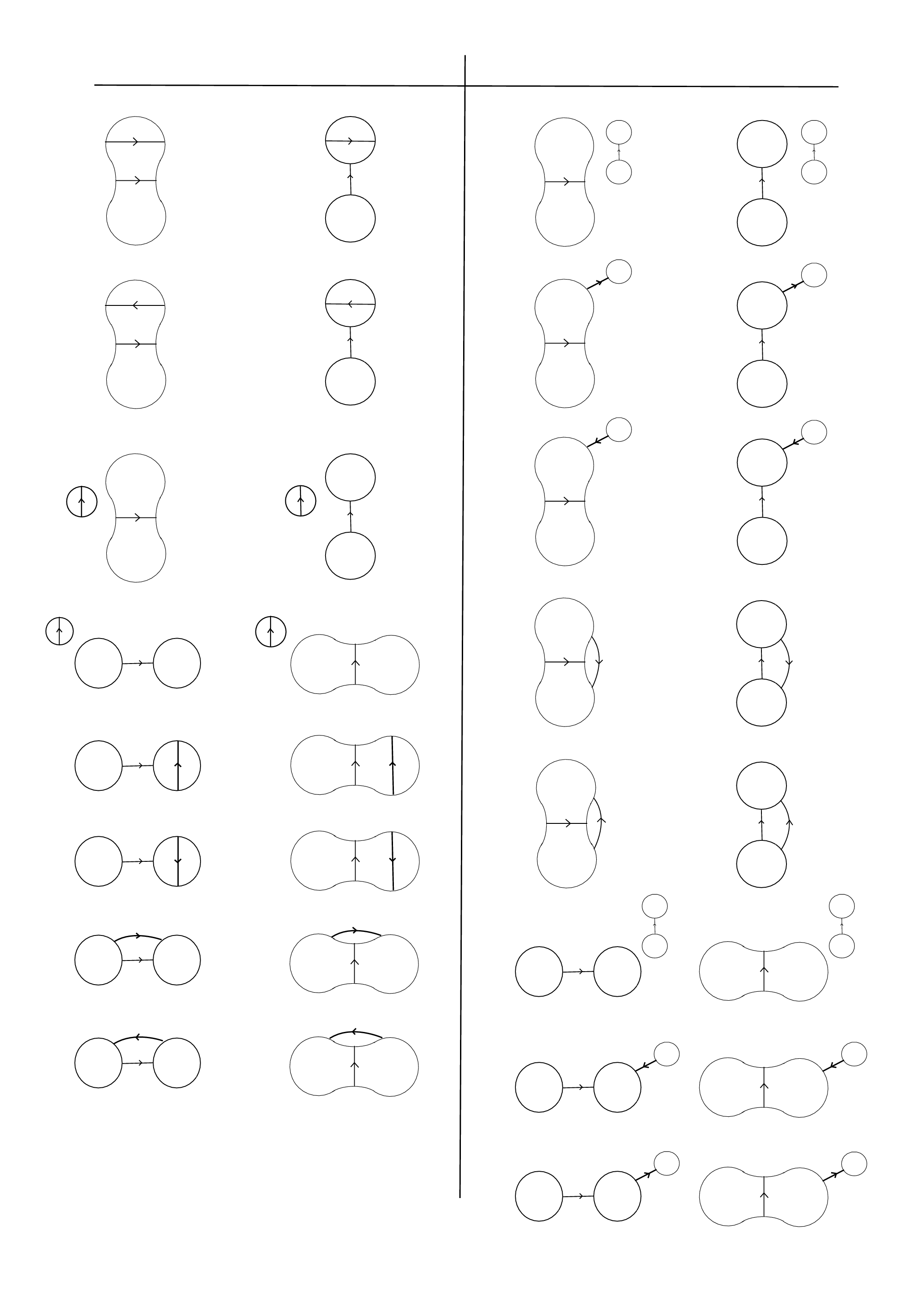}
    \end{center}
    \caption{The cases needed for Lemma~\ref{configs}.}
    \label{bubbles}
  \end{figure}

  Lemma~\ref{configs} immediately finishes the proof of
  Theorem~\ref{movemark}. Indeed, we know $\Delta\sigma(e) +
  \Delta\sigma(c) + a_f = 1$ modulo 2, since odd Khovanov homology
  satisfies $d^2 = 0$. Note that $\Delta\sigma(H) = \Delta\sigma(c)$
  when we are in the first case of Lemma~\ref{configs}, and
  $\Delta\sigma(H) = \Delta\sigma(c) + 1$ otherwise. Hence in either
  case we can conclude $\Delta\sigma(H) + \Delta\sigma(e) + a_b = 0$,
  as desired.
\end{proof}

\section{Spanning trees.}\label{spanningtrees}
As in \cite{Jaeger} and \cite{Roberts}, after inverting some of $R =
\Z[x_i]$, one can cancel the vertical differentials in the reduced
twisted complex to obtain a spanning-tree complex computing reduced
odd Khovanov homology. The situation will not be quite as nice as in
characteristic $2$, since the lack of a way to canonically determine
signs in odd Khovanov homology will lead to a sign ambiguity in the
spanning-tree differential. Still, we will discuss the situation
briefly.

Let $S$ denote the ring obtained from $R$ by inverting all products of
sums of the form $x_{i_1} + \ldots + x_{i_l}$ where $i_1, \ldots, i_l$
index some subset of the marks. Form the twisted complex with
coefficients in $S$. Consider a complete resolution $\rho$, with
circles $a_1, \ldots, a_k$. Let $w_i$ denote the sum of the variables
corresponding to marks on $a_i$. The complex $(V_{\rho}, d_{v,\rho})$
is actually the Koszul complex associated to the elements $w_1,
\ldots, w_k$ of $S$. We want to show this complex is acyclic.

In fact, the Koszul complex would already be acyclic over $R$, except
in the lowest degree (since the $w_i$ form a regular sequence in
$\Z[x_i]$). After tensoring with $S$, all the $w_i$ become invertible,
and the lowest homology of the complex (namely $S$ modulo the $w_i$)
is also trivial.

So after cancellation of $d_v$, we get a complex where the only
contributions come from connected resolutions, i.e. spanning trees of
the Tait graph of $D$. Consider two connected resolutions $T$ and
$\tilde{T}$, differing at only two crossings. There are two
intermediate two-component resolutions $\rho$ and $\rho'$ between
them. In each, one circle contains the basepoint. Let $w$ (resp. $w'$)
denote the sum of the variables of the marks on the circle without the
basepoint in $\rho$ (resp. $\rho'$). Then the contribution to
$\partial T$ in the spanning-tree complex, if we used the naive maps,
would be the sum of $T \rightarrow \rho \stackrel{1/w}{\rightarrow}
\rho \rightarrow \tilde{T}$ and $T \rightarrow \rho'
\stackrel{1/w'}{\rightarrow} \rho' \rightarrow \tilde{T}$. To put in
the actual signs, note that both $\rho$ and $\rho'$ come from
splitting circles in $T$. Hence the sign-corrections to $d_{v,\rho}$
and $d_{v,\rho'}$, and hence to $1/w$ and $1/w'$, are the same. The
other four maps, though, need to form an anticommuting square in the
cube of resolutions. The maps come from a 2-dimensional configuration
which is either $X$ or $Y$, depending on the orientations of the
relevant crossings. Recall that our convention was that $X$
anticommutes and $Y$ commutes. Hence if the configuration is $Y$, one
or three of the four maps must pick up a sign. Because of this sign,
the coefficient of the differential from $T$ to $\tilde{T}$ is $\pm
(1/w - 1/w')$. On the other hand, if the configuration is $X$, the
coefficient from $T$ to $\tilde{T}$ is $\pm (1/w + 1/w')$. The author
does not know a good way to decide between the $+$ and $-$ signs on
the outside without actually making explicit sign assignments in the
cube of resolutions.

\bibliographystyle{plain} \bibliography{biblio}

\end{document}